\newtheorem{thm}{Theorem}[section]
\newtheorem{cor}[thm]{Corollary}
\newtheorem{lem}[thm]{Lemma}
\theoremstyle{definition}
\newtheorem{dfn}[thm]{Definition}
\theoremstyle{remark}
\newtheorem{ex}[thm]{Example}
\newcommand{\R}{\mathbb{R}}
\newcommand{\Q}{\mathbb{Q}}
\newcommand{\Z}{\mathbb{Z}}
\newcommand{\Int}{\operatorname{Int}}
\newcommand{\id}{\operatorname{id}}
\def\spmapright#1{\smash{%
 \mathop{\hbox to 1.3cm{\rightarrowfill}}
  \limits^{#1}}}
\def\spmapleft#1{\smash{%
 \mathop{\hbox to 1.3cm{\leftarrowfill}}
  \limits^{#1}}}
\title[Round fold maps on $3$--manifolds]
{Round fold maps on $3$--manifolds}
\author{Naoki Kitazawa and Osamu Saeki} 
\address{Institute of Mathematics for Industry,
Kyushu University,
Motooka 744, Nishi-ku, Fukuoka 819-0395, Japan}
\email{n-kitazawa@imi.kyushu-u.ac.jp}
\email{saeki@imi.kyushu-u.ac.jp}
\date{\today}
\keywords{round fold map, graph manifold, simple stable map, $3$--manifold,
cohomology ring, normal form plumbing graph}
\subjclass[2010]{Primary 57R45;
Secondary 57N10,
58K30}
\begin{document}
\begin{abstract}
We show that a closed orientable $3$--dimensional
manifold admits
a round fold map into the plane, i.e.\ a fold
map whose critical value set consists of disjoint simple closed curves
isotopic to concentric circles, if and only if it is
a graph manifold, generalizing the characterization
for simple stable maps into the plane. Furthermore, we also
give a characterization of
closed orientable graph manifolds that admit directed
round fold maps into the plane, i.e.\ round fold maps
such that the number of regular fiber components of a
regular value increases
toward the central region in the plane.
\end{abstract}

\maketitle

\section{Introduction}\label{section1}

Let $M$ be a smooth closed manifold of dimension
$\geq 2$. It is known that if a smooth map $f : M \to
\R^2$ is generic enough, then it has only fold
and cusps as its singularities \cite{L0, L1, Wh}.
Furthermore, if $M$ has even Euler characteristic
(e.g.\ if $\dim{M}$ is odd), then the cusps can
be eliminated by homotopy. In particular,
every smooth closed orientable $3$--dimensional manifold
admits a smooth map into $\R^2$ with only fold singularities,
i.e.\ a \emph{fold map}.

In \cite{Sa1, Sa2}, the second author
considered the following smaller class
of generic smooth maps. A fold map $f : M \to \R^2$
on a smooth closed orientable $3$--dimensional manifold $M$
is a {\em simple stable map} if for every $q \in \R^2$, each component 
of $f^{-1}(q)$ contains at most one singular point and 
$f|_{S(f)}$ is an immersion with normal crossings,
where $S(f) (\subset M)$ denotes the set of
singular points of $f$. 
Note that if $f$ is a fold
map, then $S(f)$ is a regular closed submanifold of $M$.
In particular, if $f|_{S(f)}$ is an embedding,
then $f$ is a simple stable map.
In \cite{Sa1},
it has been proved that for a smooth closed orientable $3$--dimensional 
manifold $M$, the following three are equivalent to each other:
\begin{enumerate}
\item $M$ admits a fold
map $f : M \to \R^2$ such that $f|_{S(f)}$ is an embedding,
\item  $M$ admits a simple stable map into $\R^2$,
\item $M$ is a graph manifold, i.e. it 
is a finite union of $S^1$--bundles over compact surfaces
attached along their torus boundaries.
\end{enumerate}
Thus, for example, if $M$ is hyperbolic, then
$M$ never admits such a fold map.

On the other hand, the first author introduced the notion
of a round fold map \cite{Ki1, Ki2}: a smooth
map $f : M \to \R^2$ is a \emph{round fold map}
if it is a fold map and $f|_{S(f)}$ is an embedding
onto the disjoint union of some concentric circles in $\R^2$ (for details,
see \S\ref{section2}). As has been studied by the
first author, round fold maps have various nice properties.

The first main result of this paper is Theorem~\ref{thm1}, which
states that every graph $3$--manifold admits a round fold
map into $\R^2$. This generalizes the characterization result
obtained in \cite{Sa1} for simple stable maps mentioned above.

It is not difficult to observe that if $f : M \to \R^2$
is a round fold map of a closed orientable $3$--dimensional
manifold, then the number of components of the
fiber over a regular value changes exactly by one when
the regular value crosses the critical value set. We can thus
put a normal orientation to each component of the critical
value set in such a way that the orientation points
in the direction that increases the number of components
of a regular fiber.
Then, a round fold map is said to be \emph{directed}
if all the circles in the critical value set are directed
inward. The second main result of this paper (Theorem~\ref{thm2})
characterizes those graph $3$--manifolds which admit
directed round fold maps. It will turn out that the class
is strictly smaller than that of closed orientable
graph $3$--manifolds.

The paper is organized as follows. In \S\ref{section2},
we prepare several definitions and a lemma concerning
round fold maps and graph $3$--manifolds, and state
our main theorems. We also give an observation
on fibered links or open book structures
associated with round fold maps and give some examples.
In \S\ref{section3}, we prove the
main theorems. Basically, we will follow the proof 
given in \cite[Theorem~3.1]{Sa1}: however, in some steps
we need to modify the strategy for the constructions
of round fold maps.
In \S\ref{section4}, we give some corollaries and show
that the class of $3$--manifolds that admit
directed round fold maps is strictly smaller than
that of all graph $3$--manifolds, using results obtained
in \cite{DH, N}.

Throughout the paper, 
all manifolds and maps between them are smooth
of class $C^\infty$ unless otherwise specified. 
For a space $X$, 
$\id_X$ denotes the identity map of $X$. 
The symbol ``$\cong$'' denotes a diffeomorphism between
smooth manifolds.

\section{Round fold maps}\label{section2}

Let $M$ be a closed orientable $3$--dimensional manifold and
$f : M \to \R^2$ a smooth map.

\begin{dfn}
A point $p \in M$ is a \emph{singular point} of $f$
if the rank of the differential $df_p : T_pM \to T_{f(p)}\R^2$
is strictly less than two. We denote by $S(f)$ the set
of all singular points of $f$.
A point $p \in S(f)$ is a \emph{definite fold point} (resp.\ 
an \emph{indefinite fold point}, or a \emph{cusp point})
if $f$ is represented by the map
$$(u, x, y) \mapsto (u, x^2 + y^2) 
\mbox{ (resp.\ $(u, x^2 - y^2)$, or $(u, y^2+ux-x^3)$)}$$
around the origin with respect to certain local coordinates around $p$
and $f(p)$. 
We call a point $p \in S(f)$ a \emph{fold point}
if it is a definite or an indefinite fold point.
A smooth map $f : M \to \R^2$ is called a \emph{fold map}
if it has only fold points as its singular points.
Note that then $S(f)$ is a closed $1$--dimensional submanifold of $M$
and that $f|_{S(f)}$ is an immersion.
\end{dfn}

\begin{dfn}
Let $C$ be a finite disjoint union of simple closed curves in $\R^2$.
We say that $C$ is \emph{concentric} if for every pair $c_0, c_1$
of distinct components of $C$, exactly one of them, say $c_i$, is contained
in the bounded region of $\R^2 \setminus c_{1-i}$ (see Fig.~\ref{fig1}).
(In this case, we say that $c_i$ (or $c_{1-i}$)
is an \emph{inner component} (resp.\ an \emph{outer component})
with respect to $c_{1-i}$ (resp.\ $c_i$).)
In other words, $C$ is isotopic to a set of concentric circles in $\R^2$.
\end{dfn}

\begin{dfn}
We say that a smooth map $f : M \to \R^2$ of a closed $3$--dimensional
manifold $M$ into the plane is a \emph{round fold map}
if it is a fold map and $f|_{S(f)}$ is an embedding onto a concentric
family of simple closed curves.
Note that a round fold map is a simple stable map in the sense of
\cite{Sa1, Sa2}. Note also that the outermost circle component
of $f(S(f))$ consists of the images of definite fold points.
\end{dfn}

\begin{figure}[h]
\centering
\psfrag{C}{$C$}
\psfrag{ci}{$c_i$}
\psfrag{c1}{$c_{1-i}$}
\includegraphics[width=0.9\linewidth,height=0.3\textheight,
keepaspectratio]{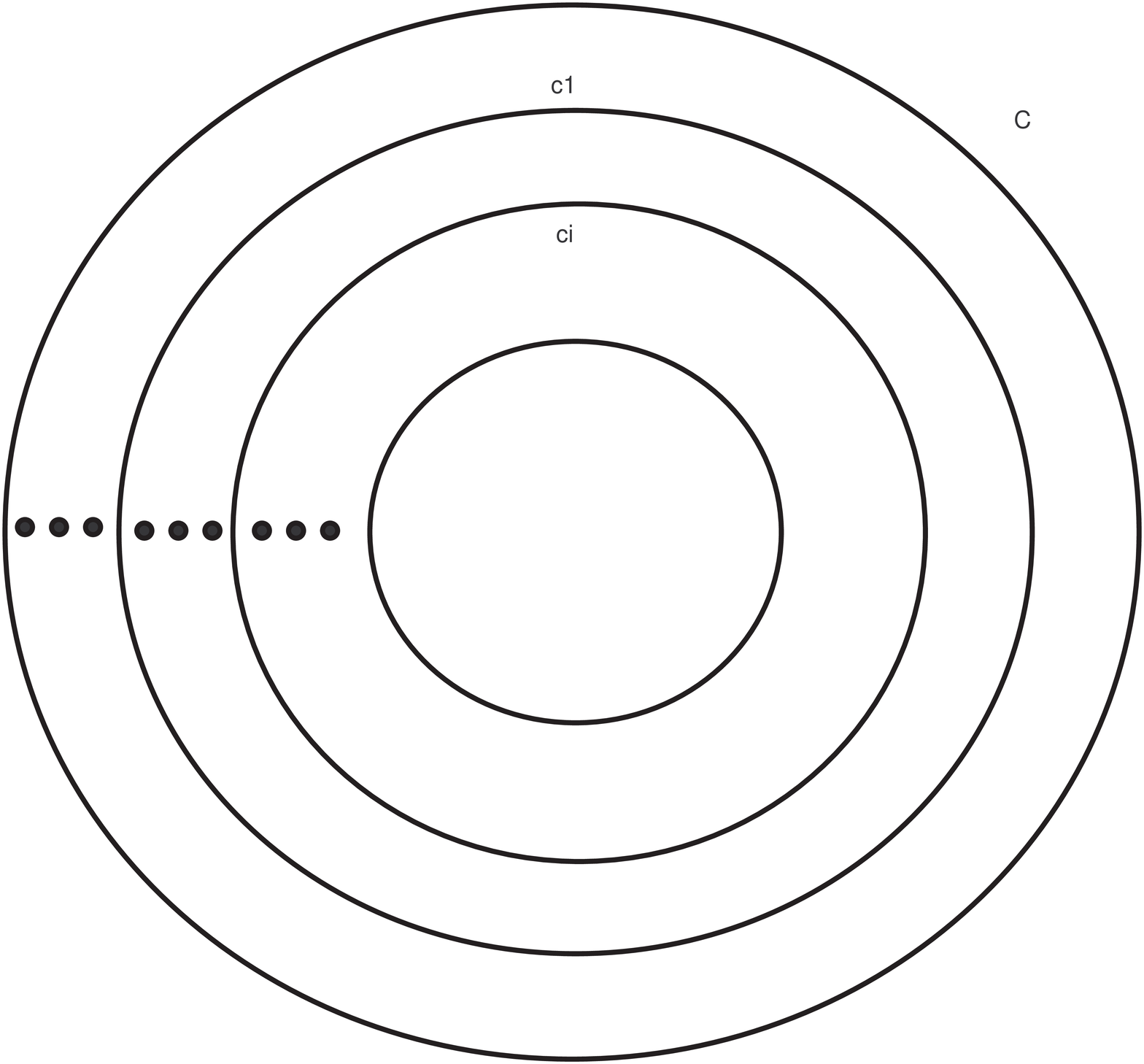}
\caption{Family of concentric simple closed curves in $\R^2$}
\label{fig1}
\end{figure}

In the following, $A$ denotes the annulus $S^1 \times
[-1, 1]$, and $P$ denotes the compact surface obtained from
the $2$--sphere by removing three open disks: 
in other words, $P$ is a \emph{pair of pants}.

Let $f : M \to \R^2$ be a round fold of a closed
orientable $3$--dimensional manifold $M$.
For a component $c$ of $f(S(f))$, take a small
arc $\alpha \cong [-1, 1]$ in $\R^2$ that intersects $f(S(f))$
exactly at one point in $c$ transversely.
Then, $f^{-1}(\alpha)$ is a compact surface
with boundary $f^{-1}(a) \cup f^{-1}(b)$, which
is diffeomorphic to a finite disjoint union of circles, where
$a$ and $b$ are the end points of $\alpha$.
Furthermore,
$f|_{f^{-1}(\alpha)} : f^{-1}(\alpha) \to \alpha$
can be regarded as a Morse function with exactly
one critical point. As $M$ is orientable, we see that
$f^{-1}(\alpha)$ is diffeomorphic to the union of $D^2$
(or $P$) and a finite number of copies of $A$
(see \cite{Sa04}, for example).
Therefore, the number of components of $f^{-1}(a)$
differs from that of $f^{-1}(b)$ exactly by one. If $f^{-1}(a)$
has more components than $f^{-1}(b)$, then we normally
orient $c$ from $b$ to $a$: otherwise, we orient $c$ from $a$ to $b$.
It is easily shown that this normal orientation is 
independent of the choice of $\alpha$.
In this way, each component of $f(S(f))$ is normally
oriented. If the normal orientation points inward,
then the component is said to be \emph{inward-directed}:
otherwise, \emph{outward-directed}.

\begin{dfn}
Let $f : M \to \R^2$ be a round fold map. 
We say that $f$ is \emph{directed}
if all the components of $f(S(f))$ are inward-directed.
It is easy to see that a round
fold map $f$ is directed if and only if the number of
components of a regular fiber over a point in the
innermost component of $\R^2 \setminus f(S(f))$ coincides with
the number of components of $S(f)$.
\end{dfn}

Let $f : M \to \R^2$ be a round fold map of
a closed connected oriented $3$--dimensional manifold.
In the following, for $r > 0$,
$C_r$ denotes 
the circle of radius $r$ centered at the origin in $\R^2$.
We may assume that 
$$f(S(f)) = \bigcup_{i=1}^t C_i$$
for some $t \geq 1$ by composing a diffeomorphism of $\R^2$
isotopic to the identity if necessary.
Set $L = f^{-1}(0)$, which is an oriented
link in $M$ if it is not empty.
Let $D$ be the closed disk centered at the origin with
radius $1/2$. 
Then, $f^{-1}(D)$ is diffeomorphic to
$L \times D$, which can be identified with
a tubular neighborhood $N(L)$ of $L$ in $M$. Furthermore, the composition
$\varphi = \pi \circ f : M \setminus \Int{N(L)} \to S^1$
is a submersion, where $\pi : \R^2 \setminus \Int{D}
\to S^1$ is the standard radial projection and
$\varphi|_{\partial N(L)} : \partial N(L) = L \times \partial D
\to S^1$ corresponds to the projection to the second factor
followed by a scalar multiplication. Hence, $\varphi$
is a smooth fiber bundle and $L$ is a fibered link.
(In other words, $M$ admits an open book structure
with binding $L$.)
The fiber (or the page) is identified with $F = f^{-1}(J)$, where
$$J = [1/2, t+1] \times \{0\} \subset \R^2,$$
and it is a compact oriented surface.
Note that $g = f|_{F} : F \to J$ is
a Morse function with exactly $t$ critical points
and that a monodromy diffeomorphism of the fibration over $S^1$
can be chosen so that it preserves $g$.

Note that all these arguments work even when
$L = \emptyset$. In this case, $F$ is a closed orientable
surface and $M$ is the total
space of an $F$--bundle over $S^1$.

Conversely, if we have a compact orientable surface
$F$, a Morse function $g : F \to [1/2, t+1]$
such that $g(\partial F) = 1/2$ and that $g$ has no
critical point near the boundary,
and a diffeomorphism $h : F \to F$ which is the
identity on the boundary and which satisfies $g \circ h = g$,
then we can construct a round fold map
$f : M \to \R^2$ in such a way that $M$ is the union
of $\partial F \times D^2$ and the total space
of the $F$--bundle over $S^1$ with geometric monodromy $h$.

\begin{ex}
Let $F$ be a compact connected orientable
surface with $\partial F \neq \emptyset$.
Let us consider the identity diffeomorphism
as the geometric monodromy in the above
construction. Then, we see that the source $3$--manifold
$M$ of the round fold map is diffeomorphic to
$(\partial F \times D^2) \cup (F \times S^1)
\cong \partial (F \times D^2)$. By using
a handle decomposition argument, we see easily that
$F \times D^2$ is diffeomorphic to $D^4$ or
a boundary connected sum of a finite number of
copies of $S^1 \times D^3$. Therefore,
$M$ is diffeomorphic either to $S^3$ or
to the connected sum of a finite number of
copies of $S^1 \times S^2$.

For example, if we start with the Morse function $g_1 : F_1
\to [1/2, 4]$
as depicted in Fig.~\ref{fig3} (left), then the singular point set
$S(f_1)$ of the resulting round fold map $f_1 : M_1 \to \R^2$
has three components and their images coincide with
$C_1$, $C_2$ and $C_3$.
The first one is outward directed, while the
other two are inward directed.
Therefore, the fold map $f_1$ is not directed. In this
example, $M_1$ is diffeomorphic to $(S^1 \times S^2) \sharp
(S^1 \times S^2)$.

On the other hand, if we start with the Morse
function $g_2 : F_2 \to [1/2, 4]$ as depicted
in Fig.~\ref{fig3} (right), then we get a round fold map
$f_2 : M_2 \to \R^2$ with the same singular values: however,
this round fold map is directed.
We can also show that $M_2$ is again diffeomorphic to
$(S^1 \times S^2) \sharp (S^1 \times S^2)$.
\end{ex}

\begin{figure}[h]
\centering
\psfrag{F}{$F_1$}
\psfrag{g}{$g_1$}
\psfrag{a}{$1/2$}
\psfrag{b}{$1$}
\psfrag{c}{$2$}
\psfrag{d}{$3$}
\psfrag{F2}{$F_2$}
\psfrag{g2}{$g_2$}
\includegraphics[width=\linewidth,height=0.4\textheight,
keepaspectratio]{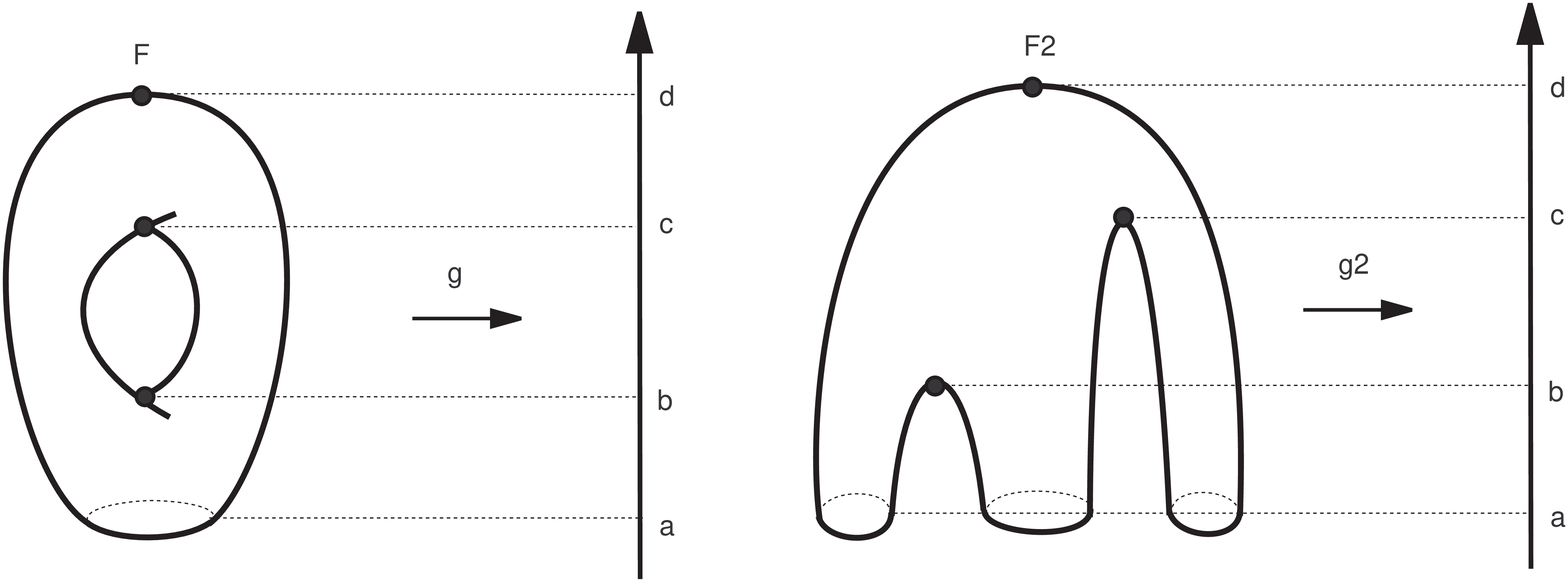}
\caption{Morse functions on surfaces with Euler characteristic $-1$}
\label{fig3}
\end{figure}

\begin{dfn}
Let $M$ be a closed orientable $3$--dimensional
manifold. It is called a \emph{graph manifold}
if it is diffeomorphic to a union of
$S^1$--bundles over compact surfaces attached along their
torus boundaries.
\end{dfn}

\begin{lem}\label{lem1}
Every closed orientable graph $3$--manifold
is diffeomorphic to a union of finite numbers of
copies of $P \times S^1$ and solid tori attached
along their torus boundaries.
\end{lem}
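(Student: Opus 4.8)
The plan is to start from the definition and refine the given decomposition piece by piece, using that each $S^1$--bundle is a Seifert fibration whose base can be cut into elementary surfaces. Write $M = \bigcup_i V_i$, where each $V_i$ is an $S^1$--bundle $\pi_i \co V_i \to \Sigma_i$ over a compact surface $\Sigma_i$, the $V_i$ being glued along torus boundary components. Since the target building blocks, $P \times S^1$ and solid tori, have all their boundary components tori, it suffices to refine each $V_i$ separately into such pieces glued along tori, in a way compatible with the existing gluings (which already occur along the tori $\pi_i^{-1}(\partial\Sigma_i)$).

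First I would dispose of the base by a pants decomposition in the simplest case. When $\Sigma_i$ is orientable with $\partial\Sigma_i \neq \emptyset$, we have $H^2(\Sigma_i;\Z)=0$, so the oriented bundle is trivial and $V_i \cong \Sigma_i \times S^1$. Cut $\Sigma_i$ along a disjoint family of \emph{simple closed curves} into pieces each diffeomorphic to $P$, $D^2$, or $A$ (this is possible for every orientable surface with boundary, with no cut needed in the degenerate cases). Taking the product with $S^1$ turns these into $P\times S^1$, solid tori $D^2\times S^1$, and collars $A\times S^1\cong T^2\times[-1,1]$; the last can be absorbed into the adjacent gluing diffeomorphisms and need not appear as separate pieces. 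Each cutting curve $c$ produces a gluing torus $\pi_i^{-1}(c)=c\times S^1$, so the outcome is of the required form. I deliberately cut only along closed curves, and not along arcs, precisely so that the new gluings occur along tori rather than annuli.

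The remaining cases — $\Sigma_i$ closed, or $\Sigma_i$ non-orientable — are handled by localizing the twisting into extra solid tori. If $\Sigma_i$ is closed, remove $\Int D^2$ from it: then $V_i$ is the union of the trivial bundle over $\Sigma_i \setminus \Int D^2$ and a solid torus glued back along a torus by a map realizing the Euler number of $V_i$, which reduces matters to the orientable bounded case plus one extra solid torus. If $\Sigma_i$ is non-orientable, orientability of the total space forces the $\Z/2$--twisted bundle (for instance, the orientable $S^1$--bundle over the M\"obius band is the twisted $I$--bundle over the Klein bottle); here I would re-fiber $V_i$ as a Seifert fibered space over an orientable base orbifold, introducing finitely many exceptional fibers of order $2$, and then cut that orientable base along simple closed curves into pairs of pants and disks so that each exceptional fiber lies over its own disk. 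Over a pair of pants with no exceptional fiber the piece is $P\times S^1$, and over a disk carrying at most one exceptional fiber (or the Euler twist) the piece is a Seifert fibered solid torus.

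The main obstacle, I expect, is precisely this bookkeeping of the twisting data. The surface cutting is routine two--dimensional topology, but one must verify carefully that both the Euler numbers of the closed bundles and the $\Z/2$--monodromy over non-orientable bases can be pushed into solid--torus pieces, and that after doing so every remaining gluing still takes place along a torus. The two facts underlying this step — that a Seifert fibered space over $D^2$ with at most one exceptional fiber is a solid torus, and that a trivial oriented circle bundle over $P$ is $P\times S^1$ — are where the reduction really rests, and keeping the gluing maps under control across the re-fiberings is the delicate point.
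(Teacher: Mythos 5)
There is a genuine gap, and it sits exactly where you placed your confidence: the claim that the $A \times S^1$ pieces ``can be absorbed into the adjacent gluing diffeomorphisms and need not appear as separate pieces.'' Absorption works only when the two boundary tori of such a piece are glued to \emph{other} pieces. But the definition of a graph manifold allows a piece to be glued to itself, and the case where this happens is not exotic: a torus bundle over $S^1$ is precisely $A \times S^1 \cong T^2 \times [-1,1]$ with its two boundary tori glued to each other (more generally, a cycle of such pieces glued end to end). For these manifolds your procedure produces nothing of the required form: the base is an annulus, so your surface-cutting step makes no cut; the single piece $A \times S^1$ is glued to itself; and there is no adjacent gluing into which it can be absorbed. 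Torus bundles over $S^1$ --- including those with Anosov monodromy --- are graph manifolds in the paper's sense, and the lemma must cover them; indeed they reappear in \S\ref{section4} as the examples whose normal form plumbing graph contains a loop. So, as written, your argument fails for an entire family of manifolds that the statement is required to handle.

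The repair is small, and it is exactly what the paper's own proof does at this spot: instead of making annulus-based pieces disappear, decompose the annulus as the union of a pair of pants and a disk, $A \cong P \cup D^2$, glued along one circle; then $A \times S^1 \cong (P \times S^1) \cup (D^2 \times S^1)$, glued along a torus, with the two original boundary tori left untouched. With this substitution every annulus-based piece, self-glued or not, becomes a union of allowed pieces, and the argument closes. Apart from this point, your route is a legitimate and more self-contained alternative to the paper's: the paper simply quotes \cite[Lemma~3.3]{Sa1} to reduce at once to $S^1$--bundles over compact orientable genus-zero surfaces and then splits those bases into copies of $P$ and disks, whereas you redo that reduction by hand, localizing Euler numbers of closed bases into solid tori and re-fibering bundles over non-orientable bases as Seifert fibrations with order-two exceptional fibers; those parts of your sketch are sound, though left at the level of standard facts.
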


\begin{proof}
It is known that every such $3$--manifold
is diffeomorphic to a union of a finite number
of $S^1$--bundles over compact connected orientable surfaces of genus zero
(for example, see \cite[Lemma~3.3]{Sa1}).
For such a base surface $B$,
if the number of boundary components is greater than
or equal to $4$, then we can decompose $B$ into
a union of a finite number of copies of $P$ attached
along their circle boundaries. If the number of boundary
components is equal to two, then $B$ is diffeomorphic
to the union of $P$ and a disk.
If the surface $B$ has no boundary, then we can decompose it into
two disks. As orientable $S^1$--bundles over $P$ or a disk are always
trivial, the result follows.
\end{proof}

As a consequence, a graph manifold can be represented by
a (multi-)graph, where each vertex corresponds to $P \times S^1$
or a solid torus and each edge corresponds to the gluing
along a pair of boundary components. Note that each gluing
corresponds to an element of the (orientation preserving)
mapping class group of the torus, identified with $SL(2, \Z)$.

Our main results of this paper are as follows.

\begin{thm}\label{thm1}
Let $M$ be a closed orientable $3$--dimensional manifold.
Then, it admits a round fold map into $\R^2$ if and only
if it is a graph manifold.
\end{thm}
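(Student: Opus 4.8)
The backward (``only if'') implication is immediate from the facts recalled in the introduction: a round fold map is in particular a simple stable map, as noted just after its definition, so by the equivalence of conditions (2) and (3) established in \cite{Sa1} its source manifold is a graph manifold. The entire content of the theorem therefore lies in the forward direction, namely in constructing a round fold map on an arbitrary closed orientable graph manifold $M$.

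For this I would use the converse construction recorded just before Lemma~\ref{lem1}: it suffices to exhibit a compact orientable page $F$, a Morse function $g \colon F \to [1/2, t+1]$ that is constant with no critical points near $\partial F$, and a monodromy $h \colon F \to F$ with $h|_{\partial F} = \id$ and $g \circ h = g$, so that the associated open-book $3$--manifold is diffeomorphic to $M$. The first step is to set up a dictionary between this data and the pieces furnished by Lemma~\ref{lem1}. Each indefinite-fold critical point of $g$ (a saddle, at which two regular level circles merge or one splits) cuts out a $P \times S^1$ piece; each definite-fold critical point (a local extremum capping a level circle by a disk) and each binding component of $L = f^{-1}(0)$ cut out a solid torus; and the regular level circles sweep out the gluing tori $f^{-1}(C_r)$. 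In this picture the page-rotation circle is a common Seifert fibre of all the bulk pieces, while a Dehn twist of $h$ along a regular level circle $c$ alters the gluing along the torus swept by $c$ by a power of the corresponding twist.

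The second step, and the place where the round-fold construction genuinely departs from the simple-stable one of \cite[Theorem~3.1]{Sa1}, is to realize $M$ \emph{concentrically}. Starting from the decomposition of Lemma~\ref{lem1}, I would pass to a normal-form plumbing graph and linearly order its vertices by radius, so that the corresponding fold circles can be nested as $C_1, \dots, C_t$; I would then build $g$ and $h$ radius by radius, installing each solid torus and each $P \times S^1$ at its prescribed level and feeding the required Dehn twists into $h$. Whereas \cite{Sa1} is free to spread the critical value set out as an immersed planar curve, here everything must fit inside a single nested family of circles, which constrains both the admissible orderings of the vertices and the way twists may be inserted.

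The main obstacle is to produce \emph{all} of the gluing data in $SL(2, \Z)$ under the rigid constraint $g \circ h = g$. The twists coming from $h$ yield only the unipotent gluings $\left(\begin{smallmatrix}1 & n \\ 0 & 1\end{smallmatrix}\right)$, and these alone realize merely the Seifert fibred spaces; a general graph manifold additionally requires gluings that interchange the Seifert fibre and base directions, the class of $\left(\begin{smallmatrix}0 & -1 \\ 1 & 0\end{smallmatrix}\right)$ in $\mathrm{MCG}(T^2)$. The crux is therefore to manufacture such fibre--base interchanges inside the concentric framework. The mechanism I would try to exploit is the coexistence of the two \emph{dual} capping types singled out in the dictionary: a definite-fold solid torus caps a level-circle (fibre) direction, whereas a binding solid torus caps the page-rotation direction, so interposing a binding component or a definite-fold cap between two pieces exchanges the roles of the two torus curves across the intervening tori. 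Arranging these caps, together with the Dehn twists, so as to reproduce exactly the edge-labels of the normal-form graph is the heart of the argument; once this is done, it remains only to compare the assembled open-book manifold with $M$ piece by piece and gluing by gluing, which is routine.
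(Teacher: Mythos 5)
Your ``only if'' direction is fine and agrees with the paper: round fold maps are simple stable maps, so \cite{Sa1} applies. The ``if'' direction, however, has a genuine gap, and it sits exactly where you yourself locate the ``heart of the argument'': you never prove that the fibre--base interchanges demanded by the edges of a plumbing graph can be realized inside the concentric framework, and the mechanism you propose for this cannot work as stated. Both of your ``dual capping types'' are solid tori attached along a \emph{single} torus: a definite-fold cap is a leaf of the induced decomposition, and a binding solid torus is not only a leaf but is forced to lie over the origin, since the binding is by definition $f^{-1}(0)$. Neither can be ``interposed between two pieces'' at an arbitrary radius, so neither can transmit an interchange across an edge joining two $P \times S^1$ vertices. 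In fact the obstruction is structural: in any round fold map (normalized so that the fold circles are the $C_k$), for $0<a<b$ the composition of $f|_{f^{-1}(C_{[a,b]})}$ with the radial projection is a submersion even at fold points, so $f^{-1}(C_{[a,b]})$ fibres over $S^1$; hence the regular-fibre circle class is carried coherently across every annular region avoiding the origin, and every gluing between adjacent bulk pieces is, in your bases, unipotent (a power of the Dehn twist along the regular fibre). All interchanges are therefore necessarily concentrated at the binding, over the origin. Consequently your plan --- linearly order the vertices of the normal-form plumbing graph by radius and install each vertex at its prescribed level with its prescribed edge gluing --- fails whenever the graph has an interchange-type edge between two vertices neither of which can be placed at the centre (for instance an edge joining two degree-three vertices, or any edge lying on a cycle): that gluing is simply not realizable between consecutive radial levels, no matter how you choose the caps and twists.

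The paper avoids this problem by not attempting to reproduce a given graph level by level. It takes the decomposition of Lemma~\ref{lem1}, inserts $A \times S^1$ pieces to make it of plumbing type, constructs an explicit fold map on each piece $X_j$ (namely $\delta$, $\iota$, or $\rho$ crossed with $\id_{S^1}$, mapped into an annular neighbourhood of the circle of radius $j$), and then extends over the gluing regions $N(T_i) \cong T^2 \times [-1,1]$ by the extension machinery of \cite[Theorem~3.1]{Sa1}; it is inside these regions that the awkward gluings are absorbed, at the cost of new fold circles which the paper keeps concentric (using, when necessary, disk regions in the target chosen away from $\infty$). To salvage your open-book approach you would have to prove the re-decomposition statement that this machinery delivers: every closed orientable graph manifold splits as solid tori union the mapping torus of a diffeomorphism preserving a Morse function on the page, with every solid torus glued along the page-rotation direction attachable at the centre. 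That statement is essentially equivalent to the theorem itself, so deferring it leaves the proof incomplete.
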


In particular, every closed orientable graph $3$--manifold
admits a fibered link. Compare this with \cite{Myers}.

\begin{thm}\label{thm2}
Let $M$ be a closed connected orientable graph $3$--manifold.
Then, it admits a directed round fold map into $\R^2$
if and only if it can be decomposed into
a union of finite numbers of copies of $P \times S^1$
and a solid torus such that the corresponding graph is a tree.
\end{thm}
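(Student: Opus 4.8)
The plan is to translate the directedness of $f$ into combinatorial constraints on the Morse function $g = f|_F : F \to J$ and the monodromy $h$ set up above, and then to run the resulting correspondence in both directions.

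For the ``only if'' part, assume $f : M \to \R^2$ is a directed round fold map with $f(S(f)) = \bigcup_{i=1}^t C_i$. Directedness means that as the radius decreases across each $C_i$ the number of fiber components increases by one, so each critical point of $g$ is either a local maximum, where a new fiber circle is born (a definite fold), or an indefinite fold at which a single fiber circle splits in two; minima and merging saddles cannot occur. I would first note that, since $M$ is connected while $h$ preserves $g$ and fixes $\partial F$, the monodromy cannot identify distinct components of $F$, so $F$ is connected and therefore carries a single maximum, namely the outermost definite fold along $C_t$. Hence $F$ is planar and the Reeb graph of $g$ is a tree that branches only inward from $C_t$. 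Reading off $M$ along the radial direction then gives the decomposition: a collar of the definite fold together with the solid tori $\partial F \times D^2$ filling the binding account for the solid tori, a collar of each splitting indefinite fold is a copy of $P \times S^1$, and the preimages of the annular chambers are copies of $T^2 \times [0,1]$ that serve as the gluing interfaces. The dual graph of this decomposition is a subdivision of the Reeb tree, hence a tree.

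For the ``if'' part, suppose $M$ is presented as a union of copies of $P \times S^1$ and solid tori whose corresponding graph $G$ is a tree. Since $P \times S^1$ has three boundary tori and a solid torus has one, $G$ is a tree whose internal vertices are trivalent and whose leaves are the solid tori. I would choose one solid-torus leaf as a root and orient every edge away from it; a linear extension of the resulting partial order assigns distinct radii to the internal vertices with each parent outside its children. Using this radial order I would build a planar fiber $F$ and a Morse function $g$ whose unique maximum is the root (the outermost definite fold $C_t$), whose internal vertices are indefinite folds splitting one circle into two as the radius decreases, and whose remaining leaves are the binding circles filled by $\partial F \times D^2$. By construction every critical circle is inward-directed, so the round fold map produced by the converse construction recorded above is directed.

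It remains to choose $h$ (the identity on $\partial F$ and preserving $g$) together with the filling slopes of the solid tori so as to reproduce the prescribed $SL(2, \Z)$ gluing data of $G$, and this realization step is the main obstacle. Following \cite[Theorem~3.1]{Sa1}, the twisting part of each gluing should be absorbed into $h$ by boundary-parallel Dehn twists around the relevant regular fiber circles, while the remaining freedom is supplied by the filling slopes at the solid-torus leaves. The point to verify is that these local choices can be propagated consistently from the root outward, and here the tree hypothesis is essential: a cycle in $G$ would force two incompatible radial orderings, equivalently an outward-directed fold, which is exactly what directedness forbids. Making this consistency argument precise by adapting the constructions of \cite{Sa1} --- rather than any single computation --- is where the real work lies.
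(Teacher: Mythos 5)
Your Morse-theoretic reformulation of directedness (no interior minima of $g$, no merging saddles, hence a connected planar page with a single maximum and a downward-branching Reeb tree) is sound, but both directions of your argument have gaps, and the second one is serious. In the ``only if'' direction, the step ``a collar of each splitting indefinite fold is a copy of $P \times S^1$'' is asserted rather than proved, and this is precisely where directedness must be used a second time: for a general round fold map, the closure of a component of $M \setminus \sqcup_k f^{-1}(C_{k-(1/2)})$ containing an indefinite fold circle may be the \emph{non-trivial} $P$--bundle over $S^1$, with monodromy interchanging the two inner boundary circles of the saddle piece, in which case one does not get a decomposition into copies of $P \times S^1$ at all. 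The paper isolates exactly this point as Lemma~\ref{lem:Pb}. Your framework can close the gap --- since there are no interior minima, every component of a sublevel set $g^{-1}([1/2,c])$ contains a circle of $\partial F$, which $h$ fixes pointwise, and the two descending branches of a saddle lie in distinct such components, so $h$ cannot interchange them --- but connectedness of $F$ alone, which is all you argue, does not give this; the argument must actually be made.

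In the ``if'' direction you explicitly defer the realization of the prescribed $SL(2,\Z)$ gluing data, calling it ``the main obstacle'' and ``where the real work lies''; that deferred step is the mathematical content of this half of the theorem, so the proof is incomplete, and moreover the mechanism you propose cannot supply it. A monodromy $h$ with $g \circ h = g$ and $h|_{\partial F} = \id$ produces, on each torus separating adjacent pieces of the induced decomposition, a gluing that sends the page-boundary class to $\pm$ itself (the angular circles of the two sides are both sections of $f$ restricted to that torus, so they differ only by multiples of the page class); together with the filling choices at the binding, this only realizes gluings that are triangular in the (page, angular) basis. An internal edge of the given tree carrying a plumbing-type gluing, which interchanges fiber and section, can therefore never be realized by one saddle per internal vertex plus boundary-parallel twists and fillings: one must insert additional fold circles, i.e.\ additional critical points of $g$, inside the gluing regions. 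This is exactly what the adaptation of \cite[Theorem~3.1]{Sa1} in the paper accomplishes, and it requires care to preserve directedness: the vertices are labeled so that a degree-one vertex gets the last label $s$ and each upper label set spans a connected subgraph, the map $h_1$ of \cite{Sa1} (which would create an outward-directed circle) is avoided, the image of $h_2$ is confined to $C_{[0,s]}$, and the non-root solid-torus leaves receive fold-free submersions $D^2 \times S^1 \to D^2$ onto enlarged disks rather than the fold map $\delta \times \id_{S^1}$. Without an argument at this level of detail, the harder half of Theorem~\ref{thm2} remains unproved.
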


Note that Theorem~\ref{thm1} generalizes the characterization
for simple stable maps obtained in \cite{Sa1}.



\section{Proofs}\label{section3}

In this section, we prove Theorems~\ref{thm1} and \ref{thm2}.

\begin{proof}[Proof of Theorem~\textup{\ref{thm1}}]
As noted above, a round fold map is a simple stable map.
Therefore, if a closed orientable $3$--dimensional
manifold admits such a map,
then it is necessarily a graph manifold by \cite{Sa1}.

Now, suppose $M$ is a graph manifold.
We will basically follow the proof of \cite[Theorem~3.1]{Sa1}
in order to construct a round fold map $f : M \to \R^2$, except for
the first step, in which a non-singular map
is constructed for each $S^1$--bundle piece in \cite{Sa1} while
we construct a fold map for each piece, as explained below.

By virtue of Lemma~\ref{lem1},
we have disjointly embedded tori $T_1, T_2, \ldots, T_r$
in $M$ such that each of the components $X_1, X_2, \ldots, X_s$
of $M \setminus \sqcup_{i=1}^r 
\Int{N(T_i)}$ is diffeomorphic either to $P \times S^1$ or to 
$D^2 \times S^1$, where $N(T_i)$ denotes a small
tubular neighborhood of $T_i$ in $M$, $1 \leq i \leq r$.
By inserting pieces diffeomorphic to $A \times S^1
\cong T^2 \times [-1, 1]$ if necessary, we may
assume that the decomposition is of a plumbing type
(for details, see \cite[Lemma~3.4]{Sa1}).
Now, each $X_i$ is diffeomorphic either to $P \times S^1$,
$D^2 \times S^1$, or $A \times S^1$.

Take a component $X_j$, $1 \leq j \leq s$. Suppose it is
diffeomorphic to $D^2 \times S^1$.
Let $\delta : D^2 \to [-1, 1]$ be the Morse function
defined by $\delta(x, y) = -x^2-y^2$, where $D^2$
is identified with the unit $2$--disk in $\R^2$
(see Fig.~\ref{fig2}, left).
Then, define $f|_{X_j}$ to be the composition
$$\eta_j \circ (\delta \times \id_{S^1}) \circ \varphi_j: 
X_j \spmapright{\varphi_j} D^2 \times S^1
\spmapright{\delta \times \id_{S^1}} [-1, 1]
\times S^1 \spmapright{\eta_j} \R^2,$$
where $\varphi_j$ is a diffeomorphism and $\eta_j$
is an embedding whose image is a small tubular neighborhood
of the circle of radius $j$ centered at the origin.
We also arrange $\eta_j$ in such a way that
$\eta_j(\{\pm 1\} \times S^1)$ coincides with the
circle of radius $j \pm (1/3)$.

\begin{figure}[h]
\centering
\psfrag{d}{$\delta$}
\psfrag{i}{$\iota$}
\psfrag{1}{$1$}
\psfrag{0}{$0$}
\psfrag{-1}{$-1$}
\psfrag{D}{$D^2$}
\psfrag{P}{$P$}
\includegraphics[width=\linewidth,height=0.3\textheight,
keepaspectratio]{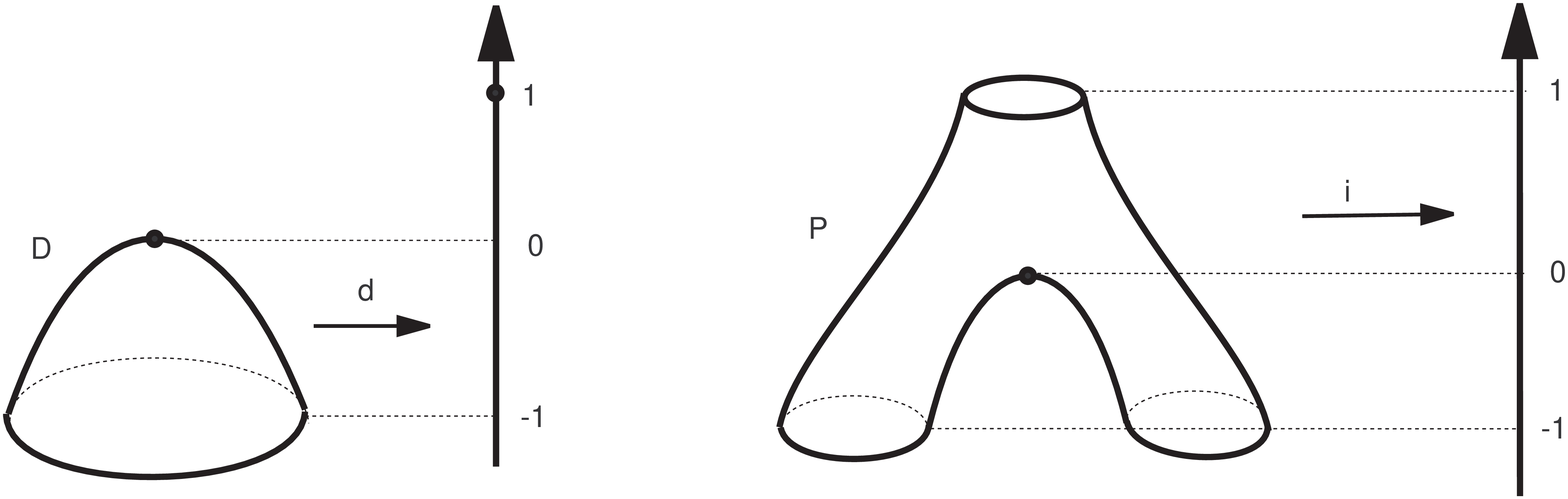}
\caption{Morse functions $\delta$ and $\iota$}
\label{fig2}
\end{figure}

Suppose $X_j$ is diffeomorphic to $P \times S^1$.
We define $f|_{X_j}$ by the composition
$$\eta_j \circ (\iota \times \id_{S^1}) \circ \varphi_j: 
X_j \spmapright{\varphi_j} P \times S^1
\spmapright{\iota \times \id_{S^1}} [-1, 1]
\times S^1 \spmapright{\eta_j} \R^2,$$
where $\varphi_j$ is a diffeomorphism,
$\iota : P \to [-1, 1]$ is the standard Morse
function with exactly one saddle point as depicted in 
Fig.~\ref{fig2} (right), and $\eta_j$
is an embedding as described in the previous paragraph.

Now, suppose $X_j$ is diffeomorphic to $A \times S^1$.
In this case, we define 
$f|_{X_j}$ by the composition
$$\eta_j \circ (\rho \times \id_{S^1}) \circ \varphi_j: 
X_j \spmapright{\varphi_j} A \times S^1
\spmapright{\rho \times \id_{S^1}} [-1, 1]
\times S^1 \spmapright{\eta_j} \R^2,$$
where $\varphi_j$ is a diffeomorphism,
$\rho : A \cong S^1 \times [-1, 1] \to [-1, 1]$ is the 
projection to the second factor, and $\eta_j$
is an embedding as described above.

Now, the map $f|_{\sqcup_{j=1}^s X_j}$ has only fold
singular points, and its restriction to the singular point
set is an embedding onto a concentric family of circles in $\R^2$.
Then, we can extend the map to get a round fold map
$f : M \to \R^2$ by the same procedure as described
in \cite[Proof of Theorem~3.1]{Sa1}.
Note that in the proof there, a simple stable map
into $S^2$ is first constructed: however, in our case,
we can directly construct a map into $\R^2$ as the
singular value set of $f|_{\sqcup_{j=1}^s X_j}$ is a concentric
family of circles in $\R^2$. Sometimes, we need to
use a disk region in the target: in such a case, we
can choose the region that does not contain the 
point $\infty \in S^2 = \R^2 \cup \{\infty\}$.
Note also that we can arrange $f$ in such a way that $f|_{S(f)}$
is an embedding by slightly perturbing $f$ near
$S(f)$ if necessary.
\end{proof}

Let us go on to the proof of the second theorem.

Let $f : M \to \R^2$ be a round fold map
of a closed orientable $3$--dimensional manifold.
By post-composing an
appropriate diffeomorphism of $\R^2$, we may assume that
$f(S(f))$ consists of the circles centered at
the origin with radii $1, 2, \ldots, t$.
Recall that, for $r > 0$,
$C_r$ denotes 
the circle of radius $r$ centered at the origin in $\R^2$.
We also put, for $0 < a < b$,
$$C_{[a, b]} = \{(x, y) \in \R^2\,|\, a \leq \sqrt{x^2+y^2} \leq b\}.$$

We can observe that $f^{-1}(C_{k-(1/2)})$ is a finite disjoint 
union of tori for each $k = 1, 2, \ldots, t$, since
$M$ is orientable.
Let $K$ be the closure of a component of 
$$M \setminus \left(\sqcup_{k=1}^t f^{-1}(C_{k-(1/2)})\right)$$
such that $f(K) \subset C_{[k-(1/2), k+(1/2)]}$.
Let $p_K : K \to S^1$ be the composition of $f|_K : 
K \to C_{[k-(1/2), k+(1/2)]}$ and the radial projection
$C_{[k-(1/2), k+(1/2)]} \to S^1$.
We see easily that $p_K$ is a submersion and 
hence is a locally trivial fibration. The fiber 
is a disjoint union of copies of $D^2$, $A$ and $P$.
Since $f|_{S(f)}$ is an embedding and $K$ is connected,
the fiber is diffeomorphic to $D^2$, $P$, or a finite
disjoint union of copies of $A$. 
If the fiber is diffeomorphic to $D^2$, then $K$
is diffeomorphic to $D^2 \times S^1$, since $K$
is an orientable $3$--dimensional manifold.
If the fiber is diffeomorphic to $P$, then
$K$ is diffeomorphic either to $P \times S^1$
or a non-trivial $P$--bundle over $S^1$
(see the proof of \cite[Lemma~2.4]{Sa1}).

Suppose that $K$ is a non-trivial $P$--bundle over $S^1$
and that $C_k \subset f(S(f))$ is inward-directed.
If $k=1$, then this leads to a contradiction, since
$f$ is a trivial bundle over the innermost region of
$\R^2 \setminus f(S(f))$. If $k > 1$, then
a component of $f^{-1}(C_{[k-(3/2), k-(1/2)]})$
adjacent to $K$ is either a non-trivial $P$--bundle
over $S^1$, or a non-trivial $(A \sqcup A)$--bundle
over $S^1$, where $A \sqcup A$ is the disjoint union
of two copies of $A$ and the monodromy for the latter
bundle interchanges the two components of $A \sqcup A$.
In the former case, $C_{k-1} \subset f(S(f))$
is outward-directed. In the latter case, we can repeat
the argument toward inner components to find an
outward-directed component.

If $C_k \subset f(S(f))$ is outward-directed, then
we can also find an inward-directed component outside
of $C_k$, since the outermost component corresponds to
$D^2 \times S^1$.

Thus we have proved the following.

\begin{lem}\label{lem:Pb}
Let $f : M \to \R^2$ be a round fold map
of a closed orientable $3$--dimensional manifold
such that
$f(S(f)) = \cup_{k=1}^t C_k$.
If $f$ is directed, then the closure
of a component of 
$$M \setminus \left(\sqcup_{k=1}^t f^{-1}(C_{k-(1/2)})\right)$$
is never diffeomorphic to the non-trivial $P$--bundle
over $S^1$.
\end{lem}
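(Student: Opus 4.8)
The plan is to prove the contrapositive in essence: assuming $f$ is directed, I want to rule out that any of the pieces $K$ (closures of components of $M \setminus \sqcup_k f^{-1}(C_{k-1/2})$) can be a non-trivial $P$--bundle over $S^1$. The setup from the preceding discussion already tells us that each such $K$ fibers over $S^1$ via $p_K$ with fiber diffeomorphic to $D^2$, $P$, or a disjoint union of copies of $A$, and that the $P$--fiber case yields either the trivial bundle $P \times S^1$ or the non-trivial $P$--bundle. So the heart of the matter is tracking the \emph{directedness} of the two boundary circles $C_{k \pm 1/2}$-images that bound $K$, and showing that a non-trivial $P$--bundle forces an outward-directed component somewhere, contradicting the hypothesis that \emph{all} components are inward-directed.

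First I would set up the key observation that for a non-trivial $P$--bundle $K$ over $S^1$, the monodromy must act nontrivially on the three boundary circles of $P$, and in the orientable setting the only nontrivial possibility (up to relabeling) is a transposition of two of the three boundary components of the fiber $P$. This means the preimages of the inner and outer bounding tori $f^{-1}(C_{k\pm 1/2})$ are distributed asymmetrically: on one side $K$ meets a single torus while on the other it meets the two tori that get swapped. Concretely, I would argue that if $C_k$ is inward-directed (the number of fiber components increases as we move inward across $C_k$), then on the inner side of $K$ the adjacent piece $f^{-1}(C_{[k-3/2,\,k-1/2]})$ is forced to be either another non-trivial $P$--bundle or a non-trivial $(A \sqcup A)$--bundle whose monodromy swaps the two annuli — exactly the dichotomy already stated in the excerpt.

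The main work is then the inductive descent toward the center. In the non-trivial $P$--bundle subcase for the adjacent inner piece, $C_{k-1}$ is immediately outward-directed, giving the contradiction. In the $(A \sqcup A)$--bundle subcase no new singular circle is crossed in a way that changes directedness, so I would iterate: each inner step either terminates in a non-trivial $P$--bundle (producing an outward-directed circle) or continues as a monodromy-swapping annular bundle. Because there are only finitely many circles $C_1, \ldots, C_t$, this descent must terminate, and the base case $k=1$ is handled directly since $f$ is a trivial bundle over the innermost region of $\R^2 \setminus f(S(f))$ — a non-trivial twist there is impossible. The symmetric argument handles the case where $C_k$ is outward-directed by ascending toward the outermost circle, where the piece is necessarily $D^2 \times S^1$ (definite folds), forcing an inward-directed circle on the outer side and again contradicting directedness.

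The step I expect to be the main obstacle is making the monodromy bookkeeping rigorous — specifically, verifying that a non-trivial orientable $P$--bundle over $S^1$ really does induce the claimed swap on boundary tori and that this swap propagates correctly through the adjacent annular pieces as either a continued $(A \sqcup A)$--swap or a terminating $P$--bundle. The subtlety is that the direction of the normal orientation on each $C_k$ is determined by the local Morse data of $f$ restricted to a transverse arc (the $D^2$/$P$/$A$ trichotomy), so I must carefully correlate the abstract monodromy permutation of fiber components with the \emph{inward} versus \emph{outward} labeling. Once that correlation is pinned down, the finiteness of $t$ and the two boundary base cases (innermost trivial bundle, outermost $D^2 \times S^1$) close the argument cleanly.
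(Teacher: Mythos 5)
Your proposal is correct and follows the paper's own proof essentially step for step: the same dichotomy for the adjacent inner piece (another non-trivial $P$--bundle, which makes $C_{k-1}$ outward-directed, or a non-trivial $(A \sqcup A)$--bundle with swapping monodromy), the same finite descent ending at the innermost region $C_{[0,1/2]}$ over which $f$ is a trivial bundle, and the same use of the outermost $D^2 \times S^1$ piece for the outward-directed case. For the monodromy bookkeeping you flag as the main obstacle, note that the transposition is forced not by orientability (a $3$--cycle on the boundary components of $P$ is also orientation-preserving) but by the fact that the single saddle in each fiber splits $\partial P$ into one circle on one side and two on the other, a splitting the monodromy must preserve; and the two swapped circles sweep out a \emph{single} boundary torus of $K$, not two tori --- this is precisely what forces the adjacent inner piece to have disconnected, swapped fibers and makes the descent work.
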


\begin{proof}[Proof of Theorem~\textup{\ref{thm2}}]
First, suppose that there exists a directed
round fold map $f : M \to \R^2$. By post-composing an
appropriate diffeomorphism of $\R^2$, we may assume that
the components of $f(S(f))$ are circles centered at
the origin with radii $1, 2, \ldots, t$.
Then the disjoint union of tori
$\sqcup_{k=1}^t f^{-1}(C_{k-(1/2)})$
decomposes $M$ into a union of copies of $P \times S^1$,
$A \times S^1$,
and $D^2 \times S^1$ attached along their torus boundaries.
Note that by Lemma~\ref{lem:Pb},
a non-trivial $P$--bundle over $S^1$
does not appear, since $f$ is directed. 
Furthermore, we can ignore the components diffeomorphic
to $A \times S^1 \cong T^2 \times [-1, 1]$ for
obtaining a decomposition of $M$.

As $f$ is directed and $M$ is connected, 
we see that the components 
diffeomorphic to $D^2 \times S^1$ are the outermost
component $f^{-1}(C_{[t-(1/2), t+(1/2)]})$
together with the components of the 
innermost part $f^{-1}(C_{[0, 1/2]})$: no other components
are diffeomorphic to $D^2 \times S^1$.
Then, we see easily that the corresponding graph describing
this decomposition of $M$ into copies of
$D^2 \times S^1$ and $P \times S^1$
is a tree, as the number of components of regular fibers
strictly increases toward the central region.

Conversely, suppose that
the graph describing the decomposition of
$M$ into copies of $P \times S^1$ and $D^2 \times S^1$ is a tree.
By inserting pieces diffeomorphic to $A \times S^1$ if necessary,
we may assume that the decomposition is of a plumbing type.
Then, the graph $\Gamma$ describing this new decomposition is also a tree.
Note that then $\Gamma$ has at least one vertex of degree one.
Let $s$ denote the number of vertices of $\Gamma$.
We label the vertices by $\{1, 2, \ldots, s\}$
in such a way that
\begin{enumerate}
\item the labeling gives a one-to-one correspondence
between the set of vertices and the set $\{1, 2, \ldots, s\}$,
\item a vertex of degree one has the label $s$,
\item for each $j \in \{1, 2, \ldots, s\}$,
the vertices of labels $\geq j$ together with
the edges connecting them constitute a connected subgraph of $\Gamma$.
\end{enumerate}
This is possible, since $\Gamma$ is a tree with only vertices
of degrees one, two or three.

Then, we follow the procedure as in the proof of
Theorem~\ref{thm1} for constructing a round fold
map on $M$, except for the components corresponding to
vertices of degree one whose label is different from $s$.
Note that in the process described
in the proof of \cite[Theorem~3.1]{Sa1}, we do not
need to use $h_1 : S^1 \times S^1 \times [-1, 1]
\to \R$ in our situation. Furthermore, when we use
$h_2$, we make sure that the corresponding image is
contained in $C_{[0, s]}$. Finally, for the components
corresponding to vertices of degree one with label $< s$,
we just consider the projection $D^2 \times S^1
\to D^2$, where the target $D^2$ should be enlarged
depending on the label. This matches with the
construction for the adjacent component.

Now, it is not difficult to see that the resulting
map $f : M \to \R^2$ is a directed round fold map.
This completes the proof.
\end{proof}

We do not know how to generalize Theorems~\ref{thm1}
and \ref{thm2} for non-orientable $3$--manifolds.
We also do not know how to classify the right-left
equivalence classes of (directed) round fold
maps on a given $3$--manifold (see a certain classification
result for simple stable maps given in \cite{Sa1}).

\section{Corollaries and examples}\label{section4}

In this section, we give some corollaries 
of our main theorems. We also show
that the class of $3$--manifolds that admit
directed round fold maps is strictly smaller than
that of all graph $3$--manifolds.

\begin{cor}
Let $M$ be a closed connected orientable graph $3$--manifold.
If $H_1(M; \Q) = 0$, then it admits a directed
round fold map into $\R^2$.
\end{cor}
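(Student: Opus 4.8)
The plan is to use Theorem~\ref{thm2} to reduce the claim to a purely combinatorial condition on a decomposition graph, and then to read off that condition from the hypothesis $H_1(M;\Q)=0$. By Theorem~\ref{thm2}, it is enough to exhibit a decomposition of $M$ into copies of $P\times S^1$ and solid tori whose associated graph is a tree. Lemma~\ref{lem1} already provides a decomposition of $M$ into such pieces; let $G$ be its graph, which is connected because $M$ is. Since a connected graph is a tree precisely when its first Betti number $b_1(G)=\dim H_1(G;\Q)$ vanishes, the whole corollary comes down to the inequality $b_1(G)\le \dim H_1(M;\Q)$.

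To prove this inequality I would introduce the natural collapsing map $q\co M\to G$. Writing $M=\bigcup_v V_v$, where each $V_v$ is the piece attached to the vertex $v$ and distinct pieces meet along the gluing tori $T_e$ indexed by the edges $e$, I choose bicollars $T_e\times[-1,1]$ and let $q$ send the complement of these collars inside each $V_v$ to the vertex $v$, and each collar $T_e\times[-1,1]$ onto the edge $e$ via the projection to $[-1,1]$. The key point is that $q_*\co H_1(M;\Q)\to H_1(G;\Q)$ is surjective: given a cycle $\gamma$ in $G$, presented as an alternating sequence of vertices and edges, I lift it to a loop $\tilde\gamma$ in $M$ crossing the corresponding tori in the prescribed order, which is possible because every piece $V_v$ and every gluing torus $T_e$ is path-connected. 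By construction $q(\tilde\gamma)$ is freely homotopic to $\gamma$, so $q_*[\tilde\gamma]=[\gamma]$; running over a basis of $H_1(G;\Q)$ then shows $q_*$ is onto. Hence $b_1(G)\le \dim H_1(M;\Q)$.

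Granting this, the corollary follows immediately: if $H_1(M;\Q)=0$ then $b_1(G)=0$, so the connected graph $G$ is a tree, and the converse direction of Theorem~\ref{thm2} produces a directed round fold map $M\to\R^2$. The only real content, and hence the main obstacle, is the surjectivity of $q_*$: one must carry out the lifting construction carefully and check that the induced map on rational first homology behaves as claimed, and one should confirm that the decomposition of Lemma~\ref{lem1} genuinely yields a well-defined connected graph $G$. An alternative would be a direct Mayer--Vietoris computation of $H_1(M;\Q)$ from the pieces and the gluing tori, but the collapsing-map argument isolates exactly the contribution of the graph's cycles and seems cleaner.
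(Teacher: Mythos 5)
Your proof is correct and takes essentially the same route as the paper: the paper also collapses the decomposition of Lemma~\ref{lem1} to a map $M \to G$ sending each piece (minus a collar of its boundary) to its vertex, shows the induced map is surjective (on $\pi_1$, which after abelianization gives exactly your surjectivity on $H_1(\cdot;\Q)$), concludes from $H_1(M;\Q)=0$ that $G$ is a tree, and invokes Theorem~\ref{thm2}.
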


\begin{proof}
Let $G$ be the graph corresponding to a
decomposition of $M$ into $P \times S^1$
and $D^2 \times S^1$ as described in Lemma~\ref{lem1}.
Then, we can naturally construct a
continuous map $\gamma : M \to G$
in such a way that for each piece, the complement
of a small collar neighborhood of the boundary
is mapped to the corresponding vertex.
Then, we can show that $\gamma$
induces a surjection $\gamma_* : \pi_1(M) \to
\pi_1(G)$. Since $H_1(M; \Q) = 0$, we see that
$G$ is a tree. Then, the result follows from
Theorem~\textup{\ref{thm2}}.
\end{proof}

Since every closed orientable Seifert $3$--manifold
over the $2$--sphere admits
a decomposition into
a union of a finite number of copies of $P \times S^1$
and a solid torus such that the corresponding graph is a tree,
we have the following.

\begin{cor}
Every closed orientable Seifert $3$--manifold over $S^2$ admits
a directed round fold map into $\R^2$. 
\end{cor}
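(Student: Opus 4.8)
The plan is to reduce the statement to Theorem~\ref{thm2} by producing, for an arbitrary closed orientable Seifert fibered $3$--manifold $M$ over $S^2$, a decomposition of $M$ into copies of $P \times S^1$ and solid tori whose dual graph is a tree. The real content of the corollary is exactly the existence of such a decomposition, which is asserted in the sentence preceding the statement; below I indicate how I would establish it, after which the conclusion is immediate from Theorem~\ref{thm2}.

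First I would recall the standard structure of an orientable Seifert fibration over $S^2$. Let $E_1, \dots, E_n$ be the exceptional fibers of $M$, and fix in addition one ordinary fiber $E_0$. Removing disjoint fibered solid-torus neighborhoods $V_0, V_1, \dots, V_n$ of $E_0, E_1, \dots, E_n$, the complement is a circle bundle over $B := S^2 \setminus \bigsqcup_{i=0}^n \Int D_i$, a sphere with $n+1$ holes, where $D_i$ denotes the disk over which $V_i$ lies. Since $B$ has nonempty boundary and $M$ is orientable, this circle bundle is trivial (as in Lemma~\ref{lem1}, the relevant Euler class lives in $H^2(B;\Z)=0$), so the complement is diffeomorphic to $B \times S^1$; the Seifert invariants of the $E_i$ and the Euler number of the fibration are recorded entirely in the gluing maps along which the $V_i$ are reattached. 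Removing the extra ordinary fiber $E_0$ is what guarantees that $B$ has a boundary even when $n=0$. Thus $M = (B \times S^1) \cup (V_0 \cup \cdots \cup V_n)$ is a union of $B \times S^1$ with $n+1$ solid tori along their torus boundaries.

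Next I would decompose the planar surface $B$. For $n+1 \ge 3$ the surface $B$ admits a pair-of-pants decomposition using $n-1$ copies of $P$, and the induced product decomposition writes $B \times S^1$ as a union of copies of $P \times S^1$; for $n+1 \le 2$, where $B$ is a disk or an annulus, one uses instead the elementary decompositions of Lemma~\ref{lem1} (inserting an $A \times S^1 \cong T^2 \times [-1,1]$ piece or a disk if needed), which does not affect the argument. The crucial point is that, because $B$ lies in $S^2$ and hence has genus zero, the dual graph of any such decomposition is acyclic: its first Betti number equals the genus of $B$, namely $0$. Attaching the solid tori $V_i$ at the $n+1$ boundary legs merely adjoins pendant vertices, so the dual graph of the resulting decomposition of $M$ into copies of $P \times S^1$ and solid tori is a tree. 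Having produced such a tree decomposition, I would conclude by applying the sufficiency direction of Theorem~\ref{thm2}.

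I expect the genus-zero argument of the previous paragraph to be the one point that must be handled with care: it is precisely the hypothesis that the base is $S^2$, and not a surface of positive genus, that forces the dual graph to be a tree, and this is where the assumption enters essentially (over a higher-genus base the dual graph would have first Betti number equal to the genus). The only remaining care needed is in the low-complexity cases $n+1 \le 2$, where one checks directly that the degenerate decompositions remain trees, which is immediate.
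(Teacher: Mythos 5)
Your proposal is correct and takes essentially the same route as the paper: the paper deduces the corollary from Theorem~\ref{thm2} by simply asserting, without proof, that every closed orientable Seifert manifold over $S^2$ admits a decomposition into copies of $P \times S^1$ and solid tori whose graph is a tree. What you have written is a correct proof of exactly that asserted fact (trivializing the circle bundle over the punctured sphere, using that every simple closed curve in a planar surface separates so the dual graph of the pants decomposition is a tree, and treating the cases of at most one exceptional fiber separately), followed by the same application of Theorem~\ref{thm2}.
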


By virtue of the realization result due to \cite{BD},
as a corollary, we see that
every linking form can be realized as that
of a $3$--manifold admitting a directed round fold
map into $\R^2$. Thus, the linking form
cannot detect the non-existence 
of a directed round fold map.

On the other hand, as to the cohomology ring, we have the
following.

\begin{cor}
If a closed orientable $3$--manifold $M$
admits a directed round fold map into $\R^2$,
then for every pair $\xi, \eta \in H^1(M; \Q)$,
their cup product $\xi \smile \eta$ vanishes in $H^2(M; \Q)$.
\end{cor}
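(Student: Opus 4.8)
The plan is to use Theorem~\ref{thm2} to reduce the statement to a homological property of tree graph manifolds, and then to exploit the fact that in a tree every splitting torus is \emph{separating}.

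First I would invoke Theorem~\ref{thm2}: since $M$ admits a directed round fold map, it decomposes as a union of pieces $V_1,\dots,V_s$, each diffeomorphic to $P\times S^1$ or to a solid torus $D^2\times S^1$, glued along tori $T_1,\dots,T_r$ whose dual graph $\Gamma$ is a tree (one may also insert $A\times S^1$ pieces, which will play no role). Because $\Gamma$ is a tree, every edge is a bridge, so each $T_j$ is separating in $M$ and hence $\iota_{j*}[T_j]=0$ in $H_2(M;\Q)$, where $\iota_j\colon T_j\hookrightarrow M$ is the inclusion. The key elementary observation is that for any $\xi,\eta\in H^1(M;\Q)$,
$$\langle (\xi\smile\eta)|_{T_j},\,[T_j]\rangle=\langle \xi\smile\eta,\,\iota_{j*}[T_j]\rangle=0,$$
so $\xi|_{T_j}\smile\eta|_{T_j}=0$ in $H^2(T_j;\Q)$. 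As the cup product on $H^1(T_j;\Q)$ is the symplectic intersection form, this forces $\xi|_{T_j}$ and $\eta|_{T_j}$ to be linearly dependent in $H^1(T_j;\Q)$ for every $j$.

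Next I would localize the cup product to the pieces. Writing each piece as $V_i=B_i\times S^1$ with planar base $B_i$ (a pair of pants or a disk), the restriction $H^2(V_i;\Q)\to H^2(\partial V_i;\Q)$ is injective: for $D^2\times S^1$ the source already vanishes, and for $P\times S^1$ one has $H^2(P\times S^1;\Q)\cong H^1(P;\Q)$, whose restriction to the boundary tori is injective because a class on a planar surface is determined by its boundary periods. Combined with the vanishing on each $T_j\subset\partial V_i$ from the previous step, this yields $(\xi\smile\eta)|_{V_i}=0$ in $H^2(V_i;\Q)$ for every $i$. Moreover, since the period of $\xi$ (resp.\ $\eta$) around the common circle factor of $V_i$ is a single number $\lambda_i$ (resp.\ $\mu_i$), the proportionality $\xi|_{T_j}\parallel\eta|_{T_j}$ on \emph{all} boundary tori forces $\xi|_{V_i}\parallel\eta|_{V_i}$ in $H^1(V_i;\Q)$ whenever $\lambda_i$ or $\mu_i$ is nonzero.

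Finally I would globalize. By Poincaré duality it suffices to show $\langle\xi\smile\eta,[S]\rangle=0$ for every closed oriented surface $S$ carrying a class in $H_2(M;\Q)$. Putting $S$ in general position with respect to the tori, cutting it along the circles $S\cap(\sqcup_jT_j)$ into subsurfaces $S_i=S\cap V_i$, choosing on each $V_i$ a primitive $\kappa_i$ of $(\xi\smile\eta)|_{V_i}$, and applying Stokes piece by piece, the contributions reorganize into
$$\langle\xi\smile\eta,[S]\rangle=\sum_j\langle\tau_j,\,[S\cap T_j]\rangle,\qquad \tau_j=\big[(\kappa_i-\kappa_{i'})|_{T_j}\big]\in H^1(T_j;\Q),$$
so that everything comes down to showing that the Mayer--Vietoris obstruction class $\tau=(\tau_j)_j$ lies in the image of $\bigoplus_iH^1(V_i;\Q)\to\bigoplus_jH^1(T_j;\Q)$, i.e.\ that the primitives can be matched across every splitting torus after adjustment by genuine cohomology classes. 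This is exactly where I would use that $\Gamma$ is a tree: rooting $\Gamma$ and working from the leaves inward, I adjust the primitive on the adjacent $P\times S^1$ piece, using that the restriction $H^1(P\times S^1;\Q)\to H^1(T_j;\Q)$ is surjective, so the matching can be achieved one edge at a time with no global obstruction—precisely because there are no cycles to close up.

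I expect this last step to be the main obstacle. The first two steps are forced and clean, but the vanishing of $\tau$—equivalently, the statement that every horizontal surface $S$ satisfies $\langle\xi\smile\eta,[S]\rangle=0$—genuinely requires the absence of cycles in $\Gamma$. The delicate sub-case is that of pieces on which both fiber periods $\lambda_i,\mu_i$ vanish, where one cannot argue through proportionality of $\xi|_{V_i}$ and $\eta|_{V_i}$ and must instead rely on the tree-ordered matching of primitives to produce the required cancellation.
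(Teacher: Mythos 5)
Your first three steps are correct and cleanly argued: in a tree every splitting torus $T_j$ separates, so $\iota_{j*}[T_j]=0$ and hence $\xi|_{T_j}$, $\eta|_{T_j}$ are proportional; the restrictions $H^2(V_i;\Q)\to H^2(\partial V_i;\Q)$ are injective for both types of pieces, giving $(\xi\smile\eta)|_{V_i}=0$; and the Stokes computation reducing everything to whether $\tau=(\tau_j)$ lies in the image of $\bigoplus_i H^1(V_i;\Q)\to\bigoplus_j H^1(T_j;\Q)$ is sound. The genuine gap is your final step, and it is not a repairable technicality in the form you propose: the statement you actually invoke there---that for a class vanishing on all pieces of a \emph{tree} decomposition the Mayer--Vietoris obstruction automatically lies in the image of the difference map---is false. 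Take $M=S^2\times S^1$, decomposed by cutting $S^2$ into a pair of pants and three disks, so that $M$ is one copy of $P\times S^1$ and three solid tori; the graph is a star, hence a tree, and every edge abuts the $P\times S^1$ piece. The generator $c$ of $H^2(S^2\times S^1;\Q)\cong\Q$, pulled back from $H^2(S^2;\Q)$, restricts to zero on every piece (on the solid tori because their $H^2$ vanishes, and on $P\times S^1$ because there it is pulled back from $H^2(P;\Q)=0$), yet $c\neq 0$. So its obstruction class $\tau$ is \emph{not} in the image of the difference map, although the graph is a tree. Consequently, no argument that uses only local vanishing plus treeness can finish the proof; the cup-product structure (the proportionality data from your second step) must be fed into the matching step itself, and your step 4 discards it.

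The proposed mechanism also fails on its own terms. Matching ``one edge at a time from the leaves inward'' requires prescribing the restriction of a correcting class on one boundary torus of a piece while leaving its restrictions to the other boundary tori unchanged. But the joint restriction $H^1(P\times S^1;\Q)\to\bigoplus_{k=1}^3 H^1(T_{j_k};\Q)\cong\Q^6$ has rank only $3$: all three restrictions carry one and the same $S^1$--fiber period, and the three boundary-circle periods are constrained (with suitable orientations they sum to zero). Moreover, at the leaves the restriction $H^1(D^2\times S^1;\Q)\to H^1(T_j;\Q)$ has rank $1$. So a correction that fixes one edge generically destroys edges already matched; in a tree the obstruction is not about ``cycles closing up'' but about the non-surjectivity of these joint restriction maps, which is exactly what the $S^2\times S^1$ example detects. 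The sub-case you flag as delicate (both fiber periods of $\xi$ and $\eta$ vanishing on a piece) is therefore not a corner case---it is where the entire content of the statement lives. For comparison, the paper does not attempt any such direct argument: it deduces the corollary from Theorem~\ref{thm2} together with Doig and Horn's computation of the intersection ring of graph manifolds whose plumbing graph is a tree, citing \cite[Theorem~5.2]{DH}. A self-contained proof along your lines would in effect have to reprove that result, carrying the torus-by-torus proportionality of $\xi$ and $\eta$ through the gluing analysis.
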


The above corollary follows from \cite[Theorem~5.2]{DH}.

Thus, for example, for every closed orientable
surface $\Sigma$ of genus $\geq 1$, the $3$--manifold
$\Sigma \times S^1$ never admits a directed
round fold map into $\R^2$, although it is a graph manifold.

Let $M$ be a closed connected orientable $3$-dimensional
manifold. If $M$ is a graph manifold, then it
can be represented by a \emph{plumbing graph} 
whose vertices and edges
have certain weights \cite{N}. Such a graph is not unique
as the decompositions of $M$ into $S^1$--bundles over
surfaces are not unique. However, we have the notion of
a \emph{normal form}, and then we have the existence and uniqueness
of such a normal form for a given closed connected orientable
graph manifold \cite[Theorem~4.1]{N}.

The following lemma can be proved by following the
proof of \cite[Theorem~4.1]{N}.

\begin{lem}
Let $M$ be a closed connected orientable graph $3$--manifold.
If it can be decomposed into
a union of finite numbers of copies of $P \times S^1$
and a solid torus in such a way that the corresponding graph is 
a tree,
then its normal form plumbing graph is a finite disjoint
union of trees.
\end{lem}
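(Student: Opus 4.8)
The plan is to run the normalization algorithm of \cite[Theorem~4.1]{N} starting from the plumbing graph naturally attached to the given tree decomposition, and to check that each reduction step preserves the property of being a \emph{forest} (a finite disjoint union of trees). Since the normal form is unique, this suffices.

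First I would build the initial plumbing graph $\Gamma_0$ from the hypothesis. Each copy of $P \times S^1$ is the trivial $S^1$--bundle over the genus-zero surface $P$ with three boundary circles, so it becomes a genus-zero vertex of degree three with Euler number zero; each solid torus is the trivial $S^1$--bundle over a disk, and together with its gluing element of $SL(2,\Z)$ it is encoded by a linear chain of genus-zero vertices coming from the continued-fraction expansion of the filling slope. Likewise, each gluing torus between two pieces is realized by inserting a linear chain along the corresponding edge. Because the decomposition graph is a tree by hypothesis, and because subdividing edges and extending leaves by linear chains both preserve trees, the resulting graph $\Gamma_0$ is a tree all of whose vertices have genus zero.

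Next I would apply the reduction moves of \cite[Theorem~4.1]{N} to bring $\Gamma_0$ to its normal form, tracking the effect on the underlying graph. The key observation is that every such move is a local operation: blowing down a $\pm 1$--vertex of degree at most two and the various chain-absorption moves only reconnect or merge the neighbors of the eliminated vertex, while the splitting moves (which detect $S^1 \times S^2$--summands and, more generally, the connected-sum decompositions of $M$ realized as boundary-connected sums of the plumbing $4$--manifolds) merely cut the graph into more components. This is precisely why the conclusion is a disjoint union of trees rather than a single tree. I would verify case by case that, applied to a forest, none of these moves can create a cycle: the two neighbors involved in any reconnection were joined only through the removed vertex, so no new loop is produced, and any blow-up needed to normalize edge signs only subdivides an edge or appends a leaf, again preserving the forest. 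Moreover the genus-increasing moves (which would convert a cycle into a handle) never become available, since no cycle is ever created; hence the process in fact stays within genus-zero forests.

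The main obstacle is exactly this case analysis: one must confirm that among all of Neumann's reduction moves there is none that identifies two distinct branches or closes a leaf back onto the graph, and that the moves which disconnect the graph split tree components only into further tree components. Granting this, the normalization terminates at a plumbing graph that is a finite disjoint union of trees, and by the uniqueness statement in \cite[Theorem~4.1]{N} this is the normal form of $M$, which proves the lemma.
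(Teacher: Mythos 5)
Your proposal is correct and is essentially the paper's own proof: the paper simply remarks that the lemma "can be proved by following the proof of \cite[Theorem~4.1]{N}," i.e.\ by running Neumann's normalization procedure on the plumbing graph obtained from the tree decomposition and checking that each reduction move (blow-downs, chain absorptions, splittings, sign-normalizing blow-ups) preserves the property of being a forest, with the splitting moves accounting for the conclusion being a disjoint union of trees. Your write-up supplies exactly this, including the construction of the initial tree plumbing graph via continued-fraction chains, so it matches the intended argument (and in fact gives more detail than the paper does).
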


As a corollary, we have the following.

\begin{cor}
Let $M$ be a closed connected orientable graph $3$--manifold
whose normal form plumbing graph contains a loop.
Then, $M$ admits a round fold map into $\R^2$ but does not
admit a directed round fold map into $\R^2$.
\end{cor}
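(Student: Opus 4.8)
The plan is to chain the two main theorems with the lemma immediately preceding this corollary, reading the second assertion contrapositively. The first assertion comes for free: since $M$ is a closed connected orientable graph $3$--manifold, Theorem~\ref{thm1} immediately yields a round fold map $M \to \R^2$. Thus all the content lies in establishing the \emph{non-existence} of a directed round fold map.

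For that, I would argue by contradiction. Suppose $M$ admits a directed round fold map into $\R^2$. Then Theorem~\ref{thm2} would force $M$ to decompose as a union of finitely many copies of $P \times S^1$ together with a single solid torus, glued along their torus boundaries, in such a way that the associated graph is a tree. Feeding this tree decomposition into the preceding lemma, I would obtain that the normal form plumbing graph of $M$ is a finite disjoint union of trees, and therefore contains no loop. Since uniqueness of the normal form \cite[Theorem~4.1]{N} makes ``the normal form plumbing graph of $M$'' a well-defined invariant of $M$, this directly contradicts the standing hypothesis that the normal form plumbing graph of $M$ contains a loop. Hence no directed round fold map can exist, which is the desired conclusion.

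The step I expect to carry all the weight is the appeal to uniqueness of the normal form. The argument works precisely because the presence of a loop is a feature of the \emph{single} graph attached to $M$, so that it cannot simultaneously be the forest produced by the preceding lemma; without this uniqueness one could not rule out the tree-type decompositions detected by Theorem~\ref{thm2}. Every other step is a direct citation of results already in hand, so I anticipate no computational or geometric difficulty beyond correctly matching the combinatorial terminology (``loop'' versus ``finite disjoint union of trees'').
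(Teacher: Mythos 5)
Your proposal is correct and is exactly the argument the paper intends: the paper states this corollary immediately after the lemma on normal form plumbing graphs, leaving implicit precisely the chain you spell out (Theorem~\ref{thm1} for existence, then Theorem~\ref{thm2} plus the lemma plus uniqueness of the normal form \cite[Theorem~4.1]{N} for non-existence). Your emphasis on uniqueness of the normal form as the load-bearing step is also the right reading of why the corollary works.
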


For example, some torus bundles over $S^1$ as described
in \cite[Theorem~6.1]{N} satisfy the assumption
of the above corollary. (More precisely, those torus bundles over $S^1$
whose monodromy matrix has trace $\geq 3$ or $\leq -3$,
give such examples.)

\section*{Acknowledgment}\label{ack}
The authors would like to thank Professor Yuya Koda
for stimulating discussions which motivated the
theme of this paper.
This work was supported by JSPS KAKENHI Grant Number 
JP17H06128.


\end{document}